\documentclass[amsfonts,12pt]{amsart}

\usepackage{epsfig}
\usepackage{amssymb}
\textwidth6.6truein \oddsidemargin-.5truecm \evensidemargin-.5truecm
%\topmargin1.5truecm
\usepackage{graphicx,amsmath,amssymb,pgf}
\newcommand*{\parallelogramm}{%
  \rlap{\rotatebox{-30}{\rule[.05ex]{.4pt}{.77em}}}%
  \kern.04em%
  \rlap{\kern.36em\raisebox{0.649519052835em}{\rule{.6em}{.4pt}}}%
  \rule{.6em}{.4pt}\kern-.04em%
  \rotatebox{-30}{\rule[.05ex]{.4pt}{.77em}}}

\newtheorem{thm}{Theorem}[section]
\newtheorem{deff}[thm]{Definition}
\newtheorem{lem}[thm]{Lemma}

\newtheorem{rem}[thm]{Remark}
\newtheorem{prp}[thm]{Proposition}

\newtheorem{prob}[thm]{Problem}

\newcommand{\conv}{{\mathrm{conv}}\,}

\newcommand{\R}{{\Bbb R}}

\newcommand{\Z}{{\Bbb Z}}

\begin{document}

 \title[Aleksandrov projection problem for convex lattice sets]{Aleksandrov projection problem for convex lattice sets}

 \author{Ning~Zhang}

 \address{Ning Zhang, Department of Mathematical and Statistical Sciences, University of Alberta, Edmonton AB, T6G 2G1, Canada}
 \email{nzhang2@ualberta.ca}

\thanks{Author is partially supported by a grant from NSERC}

\begin{abstract} 
Let $K$ and $L$ be origin-symmetric convex integer polytopes in $\R^n$. We study a discrete analogue of the Aleksandrov projection problem. If for every $u\in \Z^n$, the sets $(K\cap \Z^n)|u^\perp$ and $(L\cap \Z^n)|u^\perp$ have the same number of points, is then $K=L$? We give a positive answer to this problem in $\Z^2$ under an additional hypothesis that $(2K\cap \Z^2)|u^\perp$ and $(2L\cap \Z^2)|u^\perp$ have the same number of points.
\end{abstract}

\maketitle

\section{Introduction}
%Geometric tomography studies the unique determination of convex (or star) bodies from their sections or projections. There are an abundance of results and Aleksandrov's projection problem is one of most famous among them. That is, an origin-symmetric convex body can be uniquely determined by the areas of its projections; see \cite{Ga}.

%Discrete tomography is an important branch of geometric tomography, which considers problems of finite subsets of the integer lattice \cite{HK}. Gardner, Gronchi, and Zong initially take results from geometric tomography and try to adapt it to discrete settings; in particular, they asked whether it is possible to determine convex lattice sets from the size of their projections (see \cite{GGZ}).

Let $K$ be a convex body in $\R^n$, i.e. a compact convex set with nonempty interior. We say that $K$ is origin-symmetric if $K=-K$, where $tK:=\{tx\in \R^n: x\in K\}, t\in \R$. In 1937, Aleksandrov proved the following result \cite{Ga}:
\begin{thm}
Let $K,L\subset \R^n$ be origin-symmetric convex bodies. If $$\textnormal{vol}_{n-1}(K|u^\perp)=\textnormal{vol}_{n-1}(L|u^\perp)$$ for every $u\in S^{n-1}$, then $K$=$L$.
\end{thm}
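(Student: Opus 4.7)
The plan is to recast the equal-projection hypothesis in terms of the \emph{projection body} $\Pi K$ of $K$, defined as the origin-symmetric convex body whose support function satisfies $h_{\Pi K}(u) = \textnormal{vol}_{n-1}(K|u^\perp)$ for every $u \in S^{n-1}$. Under this definition, the assumption of the theorem becomes the identity $\Pi K = \Pi L$, and proving the theorem is equivalent to showing that the operator $\Pi$ is injective on the class of origin-symmetric convex bodies.

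To analyze $\Pi$, I would invoke Cauchy's projection formula
\[
\textnormal{vol}_{n-1}(K|u^\perp) = \frac{1}{2}\int_{S^{n-1}} |\langle u, v\rangle| \, dS_K(v),
\]
where $S_K$ denotes the surface area measure of $K$ on $S^{n-1}$. Because $K$ and $L$ are origin-symmetric, $S_K$ and $S_L$ are both even measures, and the hypothesis $h_{\Pi K}=h_{\Pi L}$ is equivalent to saying that the \emph{cosine transforms} of $S_K$ and $S_L$ coincide on all of $S^{n-1}$.

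The heart of the argument, and the main obstacle, is the injectivity of the cosine transform on even finite Borel measures on $S^{n-1}$. My approach is via spherical harmonics: by the Funk--Hecke formula, the cosine transform acts on each even spherical harmonic of degree $2k$ as multiplication by an explicit nonzero scalar, and it annihilates all odd spherical harmonics. Expanding the even signed measure $S_K - S_L$ in spherical harmonics, every harmonic coefficient must therefore vanish, and hence $S_K=S_L$.

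Finally, once $S_K = S_L$, Minkowski's uniqueness theorem for convex bodies implies that $K$ and $L$ agree up to translation. Origin-symmetry of both bodies forces that translation to be trivial, and thus $K=L$.
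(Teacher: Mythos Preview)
Your argument is the standard modern proof of Aleksandrov's projection theorem and is correct as outlined: Cauchy's formula expresses the brightness function as the cosine transform of the surface area measure, Funk--Hecke gives injectivity of the cosine transform on even measures, and Minkowski's uniqueness theorem together with origin-symmetry finishes the job. The only point where one might ask for more detail is the passage from ``all even spherical-harmonic coefficients vanish'' to ``the even measure vanishes,'' which requires knowing that spherical harmonics are dense in $C(S^{n-1})$ and hence separate finite Borel measures; this is routine.

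However, there is nothing to compare against: the paper does \emph{not} prove this theorem. It is quoted in the introduction as Aleksandrov's 1937 result, with a reference to Gardner's \emph{Geometric Tomography}, and serves only as motivation for the discrete problem that is the actual subject of the paper. All of the paper's own arguments concern the lattice setting in $\mathbb{Z}^2$ (Theorem~\ref{t1.2} and the supporting Lemmas~\ref{l3.3}, \ref{l3.5}, \ref{l3.10}), and none of them touch the continuous Aleksandrov theorem. So your proposal is fine as a self-contained proof of the stated classical result, but it neither matches nor differs from anything in the paper, because the paper offers no proof of this statement.
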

Here $u^\perp:= \{x\in \R^n:\langle x,u\rangle=0\}$. 

Gardner, Gronchi, and Zong suggested a discrete version of the Aleksandrov projection problem (see \cite{GGZ}). We say $A$ is a convex lattice set if $\conv(A)\cap \Z^n= A$, where $\conv(A)$ is the convex hull of $A$.
\begin{prob}\label{p1.1}
Let $A,B\subset \Z^n$ be origin-symmetric convex lattice sets. If $|A|u^\perp|=|B|u^\perp|$ for every $u\in\mathbb{Z}^n$, is it true that $A=B$?
\end{prob}
Here, $|A|u^\perp|$ is the cardinality of $A|u^\perp$.  Since the convex hull of a convex lattice set is a convex integer polytope, i.e. a polytope all of whose vertices are in $\Z^n$, it would be convenient to restate the problem as follows. Let $K,L\subset \R^n$ be origin-symmetric convex integer polytopes. If $|(K\cap \Z^n)|u^\perp|=|(L\cap \Z^n)|u^\perp|$ for every $u\in\mathbb{Z}^n$, is it true that $K=L$?

In \cite{GGZ}, the authors gave a negative answer to Problem \ref{p1.1} in $\Z^2$. However, it is not known whether there are other counterexamples. Zhou \cite{Zhou} and Xiong \cite{Xiong} showed that these counterexamples are unique in some special classes.
For higher dimensions, this problem is still open. Some work on related problems has been done in \cite{RYZ}. Since the answer is negative in dimension $2$, Gardner, Gronchi, and Zong asked if it is possible to impose reasonable additional conditions to make the answer affirmative. In this paper, we obtain a positive answer to Problem \ref{p1.1} in $\mathbb Z^2$ under an additional hypothesis.

Before we state the theorem, some definition should be introduced (see \cite{Ga} and \cite{Z}). Let $K$ be a convex body in $\R^n$. The support function of $K$ in the direction $u$ is
$$
h_K(u):=\sup \{\langle u,x\rangle: x\in K\}.
$$

The width function of $K$ in the direction $u$ is
$$
w_K(u):=h_K(u)+h_K(-u).
$$

If $K$ is a convex integer polytope, then we denote
$$
D_1K:=\{u\in Z^n: \exists x_1,x_2\in K\cap \Z^n, u\parallel x_1x_2\}.
$$ 

For a directed segment $u$ with the initial point $(p_1,\dots,p_n)\in \Z^n$ and the end point $(q_1,\dots,q_n)\in \Z^n$, let
$$\hat{u}:=(\frac{q_1-p_1}{d},\dots, \frac{q_n-p_n}{d})$$
denote the primitive vector in the direction $u$, where $d=\gcd(q_1-p_1,\dots,q_n-p_n)$.

We will need the well-known Pick's theorem (see \cite{Z}). Let $K\subset \R^2$ be a convex integer polygon. Then
$$
\mbox{vol}_2 (K)=|K\cap \Z^2|-\frac 12|\partial K\cap \Z^2|-1,
$$
where $\partial K$ is the boundary of $K$.

We are now ready to state our main result.
\begin{thm}\label{t1.2}
Let $K,L\subset \R^2$ be origin-symmetric convex integer polygons. If $$|(K\cap \Z^2)|u^\perp|=|(L\cap \Z^2)|u^\perp|$$ and $$|(2K\cap \Z^2)|u^\perp|=|(2L\cap \Z^2)|u^\perp|$$ for all $u\in \Z^2$, then $K=L$.
\end{thm}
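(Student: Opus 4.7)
The plan is to turn the projection counts into a width-plus-defect identity, extract global numerical invariants from the hypotheses applied to directions with large modulus, and finally recover all lattice widths by a local analysis at the extremes of $K$ and $L$.

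\emph{Setup.} For a primitive $u=(a,b)\in\Z^2$, let $u^{*}:=(-b,a)$ be its primitive integer perpendicular. Two lattice points of $K$ project to the same point of $u^{\perp}$ iff they share a lattice line $L_k:=\{x\in\R^2:\langle x,u^{*}\rangle=k\}$, $k\in\Z$, and since $K$ is an integer polygon, $\pm h_K(u^{*})\in\Z$. Among the $w_K(u^{*})+1$ integer values $k\in[-h_K(u^{*}),h_K(u^{*})]$, define the \emph{defect}
\[
D_K(u):=w_K(u^{*})+1-|(K\cap\Z^2)|u^{\perp}|,
\]
the number of $k$'s for which $L_k\cap K$ contains no lattice point of $K$; define $D_{2K}(u)$ analogously, noting $w_{2K}(u^{*})=2w_K(u^{*})$. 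The two hypotheses of the theorem then become, for every primitive $u$,
\[
(\star)\ w_K(u^{*})-w_L(u^{*})=D_K(u)-D_L(u),\qquad (\star\star)\ 2\bigl(w_K(u^{*})-w_L(u^{*})\bigr)=D_{2K}(u)-D_{2L}(u).
\]

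\emph{Global consequences.} For $|u|>\diam(K\cup L)$, two distinct lattice points of $K$ (or $L$) cannot differ by any nonzero integer multiple of the primitive $u$, so the projection $(K\cap\Z^2)\to u^{\perp}$ is injective, and $(\star)$ yields $|K\cap\Z^2|=|L\cap\Z^2|$. The same reasoning applied to $2K,2L$ gives $|2K\cap\Z^2|=|2L\cap\Z^2|$. Combining with Pick's theorem and the edge identity $|\partial(2K)\cap\Z^2|=2|\partial K\cap\Z^2|$ (each edge of $K$ with $g+1$ lattice points doubles to $2g+1$ in $2K$), we extract $|\partial K\cap\Z^2|=|\partial L\cap\Z^2|$ and $\mathrm{vol}_2(K)=\mathrm{vol}_2(L)$.

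\emph{Main step.} Since primitive integer directions are dense on $S^1$ and support functions are continuous, it suffices to show $w_K(u^{*})=w_L(u^{*})$ for every primitive $u$; by $(\star)$ this reduces to showing $D_K(u)=D_L(u)$. The defect $D_K(u)$ is a local quantity: a skipped chord must have length less than $|u|$, so skips cluster near the two extreme heights $\pm h_K(u^{*})$ and their count is controlled by the pair of edges of $K$ incident to each extreme vertex. For $2K$ the chord lengths double while the lattice spacing $|u|$ is unchanged, so $D_{2K}(u)$ is governed by the same local edge data. The plan is to compute $D_{2K}(u)-2D_K(u)$ explicitly in terms of the edges of $K$ at its extremes perpendicular to $u^{*}$; then, using the identity $D_{2K}-D_{2L}=2(D_K-D_L)$ (which follows from $(\star\star)-2(\star)$), one matches the extremal local structures of $K$ and $L$.

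\emph{Main obstacle.} The crux is this local case analysis: one must separate the case where the supporting set in direction $u^{*}$ is a single vertex from the case of a full edge parallel to $u$, and carefully track the fractional offsets of the two adjacent edges modulo $|u|$, including the possibility that a short chord happens to catch an interior lattice point of $K$ (not lying on a boundary edge). Origin-symmetry is essential to couple the contributions from $+h_K(u^{*})$ and $-h_K(u^{*})$ and prevent spurious cancellations. Once the analysis forces $D_K(u)=D_L(u)$, applying $(\star)$ yields $w_K(u^{*})=w_L(u^{*})$, completing the proof.
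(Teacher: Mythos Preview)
Your setup with the defect $D_K(u)$ and the derived relations $(\star)$, $(\star\star)$ is clean and correct, as are the global consequences. But the proof stops exactly where the content is: the ``main step'' is a plan, and the ``main obstacle'' is an honest admission that the local case analysis at the extreme chords has not been carried out. Without that analysis you have not shown $D_K(u)=D_L(u)$ for any $u$, and nothing else in the write-up substitutes for it. So as written there is a genuine gap.

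It is also worth noting that you are attempting something harder than necessary. The paper does not analyze arbitrary primitive $u$. Instead it restricts to $u\in E_K\cup E_L$, the edge directions of $K$ and $L$, where one can prove directly (Theorem~\ref{t3.2}) that $D_K(u)=0$ and $D_{2K}(u)=0$, i.e.\ $|(K\cap\Z^2)|u^\perp|=|\hat u|\,w_K(u^\perp)+1$ and similarly for $2K$. In your language this gives $D_{2K}(u)-2D_K(u)=0$ at edge directions of $K$, so your identity $D_{2K}-2D_K=D_{2L}-2D_L$ yields $D_{2L}(u)=2D_L(u)$ there. The key lemma of the paper (Lemma~\ref{l3.5}) is precisely the statement that this doubling relation forces $D_L(u)=0$: it shows, via a decomposition of $K$ into a central parallelogram strip and two caps, that one always has $D_{2L}(u)\le D_L(u)$, hence $D_L(u)=0$. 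Then $(\star)$ gives $w_K(u^\perp)=w_L(u^\perp)$ for all $u\in E_K\cup E_L$, and the elementary Lemma~\ref{l3.10} (equal widths in all edge directions of both polygons implies equality) finishes.

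So the missing idea is twofold: (i) look only at edge directions, where the defect of one body vanishes for free; and (ii) prove the one-sided inequality $D_{2L}(u)\le D_L(u)$, which together with your relation $D_{2L}=2D_L$ kills the defect of the other body. Your projected ``local analysis at the extremes'' is morally the same geometry as (ii), but (ii) is a sharp, provable inequality rather than an open-ended case split.
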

\begin{rem}\label{r1.4}
It will be clear from the proof that we do not need projections in all directions, only in directions parallel to the edges of $K$ and $L$, and one more direction $\xi\in \Z^2\backslash(D_1K\cup D_1 L)$.
\end{rem}
\section{proof of Theorem \ref{t1.2}}
\begin{thm}\label{t3.2}
Let $K$ be an origin-symmetric convex integer polygon in $\R^2$ with edges $\{e_i\}_{i=1}^{2n}$, where $e_i$ and $e_{n+i}$ are symmetric with respect to the origin. Then
$$
|(K\cap \Z^2)|{e_i^\perp}|=|\hat{e}_i|w_K(e_i^\perp)+1,\ \mbox{for }1\leq i\leq n,
$$
where $|\hat{e}_i|$ is the length of the primitive vector parallel to $e_i$. Here and below, $w_K(u^\perp)$ means the width in the direction perpendicular to $u$.
\end{thm}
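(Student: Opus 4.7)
The plan is to identify the projection $(K\cap\Z^2)|e_i^\perp$ with a subset of the image of $\Z^2$ under projection onto $e_i^\perp$ (an arithmetic progression of ``lattice levels''), and then to match this count against the width $w_K(e_i^\perp)$. Let $u$ denote the unit vector perpendicular to $\hat{e}_i$, and write $\hat{e}_i=(v_1,v_2)$ with $\gcd(v_1,v_2)=1$. A direct calculation shows that the projection of $(x,y)\in\Z^2$ onto $e_i^\perp$ equals $(-v_2 x+v_1 y)/|\hat{e}_i|$, and by B\'ezout the numerator ranges over all of $\Z$ as $(x,y)$ varies; so the image of $\Z^2$ in $e_i^\perp$ is an arithmetic progression with common step $1/|\hat{e}_i|$.

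The next step is to locate the two endpoints of the segment $(K)|e_i^\perp$. The support sets of $K$ in the directions $\pm u$ are precisely $e_i$ and $e_{n+i}$, since these are the edges of $K$ parallel to $\hat{e}_i$. Each of these edges projects to a single point on $e_i^\perp$, and because each contains lattice vertices, both endpoints lie at lattice levels in the sense of the previous paragraph. The distance between the two endpoints is $w_K(e_i^\perp)$ by definition, so the closed segment $(K)|e_i^\perp$ contains exactly $|\hat{e}_i|\cdot w_K(e_i^\perp)+1$ lattice levels.

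What remains is to verify that each such lattice level is actually attained by the projection of a lattice point of $K\cap\Z^2$. A given level corresponds to a line $\ell$ parallel to $\hat{e}_i$, on which lattice points are spaced $|\hat{e}_i|$ apart, so it suffices that the chord $\ell\cap K$ have length at least $|\hat{e}_i|$. Viewed as a function of the level, the chord length is concave by convexity of $K$, hence is bounded below by the minimum of its values at the two extremal levels; there the chord coincides with $e_i$ or $e_{n+i}$, and each is a lattice edge of length at least $|\hat{e}_i|$. Combining the three steps then yields the claimed identity.

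The only subtle point is this last step, where origin-symmetry of $K$ is essential: if the face of $K$ opposite to $e_i$ in the direction $-u$ were a single vertex rather than the edge $e_{n+i}$, chords near that vertex could shrink below $|\hat{e}_i|$, some lattice levels would be missed, and the ``$+1$'' part of the formula would break. The preceding two paragraphs, in contrast, are essentially forced by primitivity of $\hat{e}_i$ and the fact that both extremal support sets are lattice edges.
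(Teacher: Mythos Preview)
Your proof is correct and takes a genuinely different route from the paper.

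The paper proceeds in two steps: first it proves the formula for parallelograms (Lemma~\ref{l3.3}) by a quotient-lattice count combined with Pick's theorem, and then it reduces the general origin-symmetric polygon to this case by inscribing the parallelogram on the edges $e_1$, $e_{n+1}$ and sliding every lattice point of $K$ along the direction $e_1$ into that parallelogram.

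Your argument bypasses both the parallelogram lemma and Pick's theorem. You identify the projection of $\Z^2$ onto $e_i^\perp$ directly as the arithmetic progression $\frac{1}{|\hat e_i|}\Z$ via B\'ezout, count the levels in the segment $K|e_i^\perp$, and then use concavity of the chord-length function together with the fact that the two extremal chords are the lattice edges $e_i$, $e_{n+i}$ (each of length at least $|\hat e_i|$) to conclude that every intermediate level carries a lattice point of $K$. The paper's approach has the advantage that the parallelogram lemma is a reusable building block and yields a full lattice-point count; your approach is shorter, more self-contained, and makes the role of origin-symmetry completely explicit---it is precisely what forces both extremal support sets in the direction $e_i^\perp$ to be edges rather than vertices, which is what keeps the chord length from dropping below $|\hat e_i|$.
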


We will first prove the theorem in a simple case.
\begin{lem}\label{l3.3}
Let $K\subset \R^2$ be a parallelogramm with edges $\{e_i\}_{1\leq i\leq 4}$, where $e_1\parallel e_3$ and $e_2\parallel e_4$. Then
$$
|(K\cap\Z^2)|{e_i^\perp}|=|\hat{e}_i|w_K(e_i^\perp)+1,\ \mbox{for } i=1,2.
$$
\end{lem}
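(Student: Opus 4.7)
Proof plan.

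The plan is to observe that the projection $K|e_1^\perp$ is a segment whose endpoints already sit on the image lattice $\pi(\Z^2)$, where $\pi$ denotes orthogonal projection onto $e_1^\perp$, and then to check that the projections of $K\cap\Z^2$ hit every lattice point inside this segment. The two cases $i=1,2$ are symmetric, so I focus on $i=1$.

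Since $\hat e_1$ is primitive it extends to a $\Z$-basis $\{\hat e_1,w\}$ of $\Z^2$ with $|\det(\hat e_1,w)|=1$, and this determinant identity forces $|\pi(w)|=1/|\hat e_1|$. Hence $\pi(\Z^2)=\Z\,\pi(w)$ is a $1$-dimensional lattice of spacing $1/|\hat e_1|$ on $e_1^\perp$. I then claim $(K\cap\Z^2)|e_1^\perp=\pi(\Z^2)\cap(K|e_1^\perp)$. The inclusion $\subseteq$ is immediate. For $\supseteq$, given $p$ in the right-hand side, the fibre $\pi^{-1}(p)$ is a line parallel to $e_1$ containing lattice points spaced $|\hat e_1|$ apart, and its intersection with $K$ is a chord of $K$ parallel to $e_1$. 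The essential use of the parallelogram hypothesis is that every such chord has the same length, namely $|e_1|$; since $e_1$ is a lattice vector this length is a positive integer multiple of $|\hat e_1|$, in particular $\ge|\hat e_1|$, forcing the chord to contain at least one lattice point of $\pi^{-1}(p)$. This is the main obstacle in the proof, and the unique place where the parallelogram rigidity is used.

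For the count, each of the two edges of $K$ parallel to $e_1$ collapses under $\pi$ to a single point, and these two points are precisely the endpoints of the segment $K|e_1^\perp$; they lie in $\pi(\Z^2)$ because the edge vertices are in $\Z^2$. The segment has length $w_K(e_1^\perp)$, so the number of points of a lattice of spacing $1/|\hat e_1|$ inside a closed interval of length $w_K(e_1^\perp)$ whose two endpoints are on the lattice is exactly $|\hat e_1|\,w_K(e_1^\perp)+1$, which is the desired formula.
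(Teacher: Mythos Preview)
Your argument is correct, and it takes a genuinely different route from the paper's.

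The paper works with the sublattice $\Lambda=\Z\hat e_1+\Z\hat e_2$ and splits $K\cap\Z^2$ into the points of $K\cap\Lambda$ and the rest; it counts how many lattice points each line parallel to $e_1$ carries in each case, expresses $|(K\cap\Z^2)|e_1^\perp|$ in terms of $|K\cap\Z^2|$, $|e_1\cap\Z^2|$, $|e_2\cap\Z^2|$, and then invokes Pick's theorem to evaluate $|K\cap\Z^2|$ and simplify. Your approach bypasses all of this: you identify $\pi(\Z^2)$ as a rank-one lattice of step $1/|\hat e_1|$ via a unimodular completion of $\hat e_1$, and you prove directly that $(K\cap\Z^2)|e_1^\perp=\pi(\Z^2)\cap(K|e_1^\perp)$ by using the parallelogram hypothesis to guarantee each fibre chord has length $|e_1|\ge|\hat e_1|$ and hence catches a lattice point. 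The count then drops out immediately since the endpoints of $K|e_1^\perp$ are images of vertices. Your argument is shorter, avoids Pick's theorem entirely, and isolates exactly where the parallelogram shape is used; the paper's computation is more indirect but has the minor virtue of also exhibiting the fibre cardinalities explicitly.
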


\begin{proof}
Consider the point lattice $\Lambda$ generated by $\hat{e}_1$ and $\hat{e}_2$ and the quotient map $\pi: \R^2\to \R^2/\Lambda$. Set $l(e_1)$ to be the line passing through the origin and parallel to $e_1$.
If $x\in K\cap \Lambda$, then 
$$
|(x+l(e_1))\cap (K\cap\Z^2)|=|e_1\cap \Z^2|.
$$ 
If $x\in (K\cap\Z^2)\backslash \Lambda$, then $\pi((x+l(e_1))\cap (K\cap\Z^2))$ contains only one point; otherwise, $x\in \Lambda$. Thus,
$$
|(x+l(e_1))\cap (K\cap\Z^2)|=|e_1\cap \Z^2|-1.
$$
One can see that,
$$
|(K\cap \Lambda)|e_1^\perp|=|e_2\cap \Z^2|.
$$
Furthermore when projecting $(K\cap\Z^2)\backslash \Lambda$ onto $e_1^\perp$,  each point in the projection has $|e_1\cap \Z^2|-1$ preimages. Thus,
$$
|((K\cap\Z^2)\backslash \Lambda)|e_1{\tiny }^\perp|=\frac{|(K\cap\Z^2)|-|e_1\cap \Z^2||e_2\cap \Z^2|}{|e_1\cap \Z^2|-1};
$$
hence,
\begin{eqnarray*}
|(K\cap\Z^2)|{e_1^\perp}|&=&\frac{|(K\cap\Z^2)|-|e_1\cap \Z^2||e_2\cap \Z^2|}{|e_1\cap \Z^2|-1}+|e_2\cap \Z^2|\\
&=& \frac{|(K\cap\Z^2)|-|e_2\cap \Z^2|}{|e_1\cap \Z^2|-1}\\
&=& \frac{\mbox{vol}_2(K)+|e_1\cap \Z^2|-1}{|e_1\cap \Z^2|-1}\ (\mbox{by Pick's theorem})\\
&=& \frac{|\hat{e}_1|(|e_1\cap \Z^2|-1) w_K(e_1^\perp)+|e_1\cap \Z^2|-1}{|e_1\cap \Z^2|-1}\\
&=& |\hat{e}_1|w_K(e_1^\perp)+1.
\end{eqnarray*}
\end{proof}

\begin{proof}[Proof of Theorem \ref{t3.2}]
Without loss of generality, we only need to compute $|(K\cap \Z^2)|{e_1^\perp}|$. Create a convex lattice set with convex hull being a parallelogramm with edges $e_1$ and $e_{n+1}$, denoted by $\parallelogramm$. Note that, for any $x\in (K\cap \Z^2)\backslash \parallelogramm$, $x+l(e_1)\cap \parallelogramm\neq \emptyset$. Thus, there exists $m\in \Z$, such that $x\in \parallelogramm+me_1$, which implies $x-me_1\in \parallelogramm\cap \Z^2$. Therefore, by Lemma \ref{l3.3}
$$
|(K\cap\Z^2)|{e_1^\perp}|=|(\parallelogramm\cap\Z^2)|{e_1^\perp}|= |\hat{e}_1|w_K(e_1^\perp)+1.
$$
\end{proof}

Theorem \ref{t3.2} implies that if $K$ and $L$ have parallel edges, then there is a uniqueness in Problem \ref{p1.1}.
\begin{lem}\label{l3.5}
Let $K$ be an origin-symmetric convex integer polygon in $\mathbb{Z}^2$. Let $u\in D_1K$. If $2(|(K\cap \Z^2)|{u^\perp}|-1)=|(2K\cap \Z^2)|{u^\perp}|-1$,
then 
$$
|(K\cap \Z^2)|{u^\perp}|=|\hat{u}| w_{K}(u^\perp)+1.
$$
\end{lem}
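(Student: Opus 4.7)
The plan is to reduce the lemma to a one-dimensional sumset problem. Let $\pi$ denote the orthogonal projection $\R^2\to u^\perp$, and set $A:=(K\cap\Z^2)|u^\perp$ and $B:=(2K\cap\Z^2)|u^\perp$, viewed as finite subsets of the image lattice $\pi(\Z^2)$ on $u^\perp$, which has Euclidean spacing $1/|\hat{u}|$.

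The first observation is that $A+A\subseteq B$: for $x,y\in K\cap\Z^2$ one has $x+y\in K+K=2K$ by convexity and $x+y\in\Z^2$, so $\pi(x)+\pi(y)=\pi(x+y)\in B$. Combined with the elementary bound $|A+A|\geq 2|A|-1$ for any finite $A\subset\Z$ (with equality if and only if $A$ is an arithmetic progression), the hypothesis $|B|=2|A|-1$ forces $A+A=B$ and $A$ to be an arithmetic progression. Since $K=-K$, the set $A$ is origin-symmetric, and since the vertices of $K$ attaining $\pm h_K(u^\perp)$ are lattice points, $\pm h_K(u^\perp)\in A$. Consequently
\[
A=\delta\cdot\{-M,-M+1,\ldots,M\},\qquad M\delta=h_K(u^\perp),
\]
where $d:=|\hat{u}|\delta$ is a positive integer. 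The conclusion of the lemma is exactly the assertion $d=1$.

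Suppose, for contradiction, that $d\geq 2$. Let $\ell_K(q)$ denote the length of the chord $K\cap\pi^{-1}(q)$. Then $\ell_K$ is concave on $[-h_K(u^\perp),h_K(u^\perp)]$ (one-dimensional Brunn--Minkowski), symmetric about $0$ (from $K=-K$), and satisfies $\ell_{2K}(q)=2\ell_K(q/2)$. The hypothesis $u\in D_1K$ supplies two distinct lattice points in a common fiber, so $\ell_K(q^*)\geq|\hat{u}|$ for some $q^*$; concavity and symmetry then promote this to $\ell_K(0)\geq|\hat{u}|$.

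Since $d\geq 2$, the Euclidean position $1/|\hat{u}|$ on $u^\perp$ does not lie in $B$, so the fiber $2K\cap\pi^{-1}(1/|\hat{u}|)$ contains no point of $\Z^2$. Lattice points along this line are spaced by $|\hat{u}|$, which forces the fiber length to satisfy
\[
2\ell_K\bigl(1/(2|\hat{u}|)\bigr)=\ell_{2K}\bigl(1/|\hat{u}|\bigr)<|\hat{u}|.
\]
But $1/(2|\hat{u}|)$ is the midpoint of $0$ and $1/|\hat{u}|$, so concavity of $\ell_K$ gives
\[
\ell_K\bigl(1/(2|\hat{u}|)\bigr)\geq\tfrac12\bigl(\ell_K(0)+\ell_K(1/|\hat{u}|)\bigr)\geq\tfrac12\ell_K(0)\geq\tfrac{|\hat{u}|}{2},
\]
contradicting the previous display. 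Therefore $d=1$, and $|A|=2M+1=2|\hat{u}|h_K(u^\perp)+1=|\hat{u}|w_K(u^\perp)+1$.

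The step that demands the most care is the bookkeeping of the reciprocal scalings involved: the image lattice on $u^\perp$ has spacing $1/|\hat{u}|$, while the lattice points along any line parallel to $u$ in $\Z^2$ are spaced by $|\hat{u}|$, and these spacings propagate through the identity $\ell_{2K}(q)=2\ell_K(q/2)$. Once that is in place, the proof is a combination of the equality case of the classical sumset inequality with the concavity of the chord-length function evaluated precisely at the midpoint $1/(2|\hat{u}|)$.
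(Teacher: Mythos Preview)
Your proof is correct and takes a genuinely different route from the paper's.

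The paper argues geometrically: it isolates the outermost pair $x_1,x_2\in K\cap\Z^2$ on a line parallel to $u$, uses the lines through $\pm x_1,\pm x_2$ to cut $K$ into a central strip $E_2\cap K$ (where Theorem~\ref{t3.2} applies directly) and two caps $E_1\cap K$, $E_3\cap K$; it then dilates, splits $2E_1\cap 2K$ again into two halves, and compares counts via explicit parallelogram containments to force the cap contribution $m$ to equal $|\hat u|\,w_{E_1\cap K}(u^\perp)+1$. Your argument instead recasts the hypothesis as the equality case of the one-dimensional sumset inequality $|A+A|\ge 2|A|-1$, so that $A=(K\cap\Z^2)|u^\perp$ must be an arithmetic progression in $\pi(\Z^2)$; the only remaining issue is that its step might be a proper multiple $d\ge2$ of the lattice spacing $1/|\hat u|$, and you rule this out with a short Brunn--Minkowski (chord-concavity) argument at the midpoint $1/(2|\hat u|)$. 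Your approach is shorter, avoids the case split on whether $u$ is an edge direction, and does not rely on Theorem~\ref{t3.2} or Pick's theorem at all; the paper's approach, on the other hand, stays entirely within the parallelogram-counting framework it has already set up and makes the role of the dilation $K\mapsto 2K$ visible through the explicit inclusion $E_{11}\cap 2K\supset (E_1\cap K)+z$.
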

\begin{proof}
Let $u\in D_1K$. If $u$ is parallel to one of the edges of $K$, then, by Theorem \ref{t3.2}, $|(K\cap \Z^2)|{u^\perp}|=|\hat{u}| w_{K}(u^\perp)+1$; if not, consider the pair of points $(x_1,x_2)\in\{(x,y)\in K\times K: xy\parallel \hat{u}\}$ such that
$$
\mbox{dist}(O,\overline{x_1x_2})=\max_{\{(x,y)\in K\times K: xy\parallel \hat{u}\}}\mbox{dist}(O,\overline{xy}).
$$
Here, we denoted by $\mbox{dist}(O,A)=\inf_{x\in A}\|x-O\|_2$, the distance between $O$ and a set $A$. The set $\{(x,y)\in K\times K: xy\parallel \hat{u}\}$ is not empty, since $u\in D_1K$.

Thus, the lines passing through $x_1,x_2$ and $-x_1,-x_2$ divide $\mathbb{R}^2$ into three parts $E_1, E_2$, and $E_3$, where $O\in E_2$ and $E_1,E_3$ are reflections of each other with respect to $O$. 

Note that, $E_2\cap K\cap \Z^2$ is a convex lattice set and $x_1,x_2,-x_1,-x_2$ lie on two parallel edges of $E_2\cap K$. (Here, $E_2\cap K$ can be a segment.) Then, by Theorem \ref{t3.2}, we have 
$$
|(E_2\cap K\cap \Z^2)|{u^\perp}|=|\hat{u}| w_{E_2\cap K}(u^\perp)+1
$$
and set $|(E_1\cap K\cap \Z^2)|{u^\perp}|=|(E_3\cap K\cap \Z^2)|{u^\perp}|=m$. We have
$$
|(K\cap \Z^2)|{u^\perp}|=2m+|\hat{u}| w_{E_2\cap K}(u^\perp)-1.
$$

On the other hand, $|(2E_2 \cap2K\cap \Z^2)|{u^\perp}|=2|\hat{u}| w_{E_2\cap K}(u^\perp)+1$. Moreover, a line $l$ parallel to $u$ divides $2E_1\cap 2K$ into two parts of equal width in the direction perpendicular to $u$, denoted by $E_{11}\cap 2K$ and $E_{12}\cap 2K$, where $\mbox{dist}(O,E_{11})> \mbox{dist}(O,E_{12})$. 

Note that there exists a pair of points $y_1,y_2\in l\cap 2K\cap \Z^2$. To see this, pick a point $z$ from $E_1\cap K\cap\Z^2$ such that $w_{[-z,z]}(u^\perp)=w_{K}(u^\perp)$, where $[-z,z]$ is the segment connecting $-z$ and $z$. Then $2z, 2x_1,2x_2\in 2K\cap \Z^2$ implies $y_1=z+x_1,y_2=z+x_2\in 2K\cap l$.

Now we obtain $E_{11}\cap 2K\supset E_1\cap K+z$. To see this, assume $E_1\cap K=\{x\in \R^2: \langle x, v_i\rangle\leq a_i\}$ with $x_1,x_2\in \{x\in \R^2: \langle x, v_1\rangle= a_1\}$, then $\langle z,v_1\rangle=a_1-w_{E_1\cap K}(v_1)$ and $\langle u,v_1\rangle=0$. Thus for any $x\in E_1\cap K$, $\langle x+z, v_i\rangle\leq 2a_i$ and $\langle x+z,v_1\rangle\leq2a_1 -w_{E_1\cap K}(v_1)$, implying $x+z\in 2(E_1\cap 2K)\cap E_{11}=E_{11}\cap 2K$.

Since $E_{12}\cap 2K$  contains a parallelogramm $\parallelogramm$ with vertices $2x_1,x_1+x_2,y_1,y_2$, we have
$$
|(E_{11}\cap 2K\cap \Z^2)|u^\perp|\geq |(E_1\cap K\cap \Z^2)|u^\perp|=m
$$
and
$$
|(E_{12}\cap 2K\cap \Z^2)|u^\perp|=|\hat{u}| w_{\parallelogramm}(u^\perp)+1=|\hat{u}| w_{E_1\cap K}(u^\perp)+1.
$$
Hence,
$$
|(2E_1\cap 2K\cap \Z^2)|{u^\perp}|\geq m+ |\hat{u}| w_{E_1\cap K}(u^\perp).
$$
Therefore,
$$
|(2K\cap \Z^2)|{u^\perp}|\geq 2(m+ |\hat{u}|  w_{E_1\cap K}(u^\perp))+2 |\hat{u}|  w_{E_2\cap K}(u^\perp)-1.
$$
By the assumption, we have
\begin{eqnarray*}
&&
2(2m+ |\hat{u}|  w_{E_2\cap K}(u^\perp)-2)=2(|(K\cap \Z^2 )|{u^\perp}|-1)\\
&&=|(2K\cap \Z^2)|{u^\perp}|-1\geq 2(m+ |\hat{u}|  w_{E_1\cap K}(u^\perp))+2 |\hat{u}| w_{E_2\cap K}(u^\perp)-1,
\end{eqnarray*}
which implies
$$
m\geq  |\hat{u}|  w_{E_1\cap K}(u^\perp)+1.
$$

On the other hand, $m\leq |\hat{u}| w_{E_1\cap K}(u^\perp)+1$, by constructing a large parallelogramm containing $E_1\cap K$, that has two edges parallel to $u$ and whose width perpendicular to $u$ is $w_{E_1\cap K}(u^\perp)$; thus,
$$
m=  |\hat{u}| w_{E_1\cap K}(u^\perp)+1.
$$
The conclusion follows.
\end{proof}

\begin{deff}\label{d3.6}
Let $\mathcal{K}^n$ be the collection of all origin-symmetric convex bodies in $\mathbb{R}^n$. Define an operator $\Cup:\mathcal{K}^n\times\mathcal{K}^n\to \mathcal{K}^n$, satisfying
$$
A\Cup B:= \conv(A\cup B).
$$
\end{deff}

One can easily prove the following properties.
\begin{prp}\label{p3.8}
Let $A,B\in \mathcal{K}^n$. Then
$$
h_{A\Cup B}(u)=\max\{h_A(u),h_B(u)\}\ \mbox{ and }\  w_{A\Cup B}(u)=\max\{w_A(u),w_B(u)\}.
$$
\end{prp}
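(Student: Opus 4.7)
The plan is to establish the support-function identity first by standard double inequality, then deduce the width-function identity from it using the origin-symmetry of $A$ and $B$.

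For the support function, the direction $\max\{h_A(u), h_B(u)\} \le h_{A \Cup B}(u)$ is immediate from monotonicity of $h$ under set inclusion, since both $A \subseteq A \Cup B$ and $B \subseteq A \Cup B$. For the reverse inequality, I would take any $x \in \conv(A \cup B)$ and use Carath\'eodory's theorem (or the definition of convex hull) to write $x = \sum_{i} \lambda_i x_i$ with $\lambda_i \ge 0$, $\sum_i \lambda_i = 1$, and each $x_i$ lying in $A \cup B$. Then
$$
\langle x, u \rangle = \sum_i \lambda_i \langle x_i, u \rangle \le \sum_i \lambda_i \max\{h_A(u), h_B(u)\} = \max\{h_A(u), h_B(u)\},
$$
since each $\langle x_i, u \rangle$ is bounded above by either $h_A(u)$ or $h_B(u)$ according to whether $x_i \in A$ or $x_i \in B$. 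Taking the supremum over $x \in A \Cup B$ yields $h_{A \Cup B}(u) \le \max\{h_A(u), h_B(u)\}$.

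For the width function, I would simply expand the definition and invoke origin-symmetry: since $A = -A$ and $B = -B$, we have $h_A(-u) = h_A(u)$ and $h_B(-u) = h_B(u)$, so
$$
w_{A \Cup B}(u) = h_{A \Cup B}(u) + h_{A \Cup B}(-u) = 2\max\{h_A(u), h_B(u)\} = \max\{2h_A(u), 2h_B(u)\} = \max\{w_A(u), w_B(u)\}.
$$

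There is no real obstacle here; the statement is a routine consequence of the definition of the convex hull together with the behaviour of support functions under unions. The only subtlety worth flagging is that the width identity genuinely uses origin-symmetry of both $A$ and $B$: without it, one would only obtain $w_{A \Cup B}(u) \le w_A(u) + w_B(u)$ in general, not a clean max.
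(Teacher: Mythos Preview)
Your proof is correct and complete. The paper does not supply a proof at all---it merely states that ``one can easily prove the following properties''---so your argument fills in exactly the routine verification the author left to the reader.
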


\begin{lem}\label{l3.10}
Let $K$ and $L$ be origin-symmetric convex polygons in $\mathbb{R}^2$. If $w_K(u^\perp)=w_L(u^\perp)$ for all $u\in E_K\cup E_L$, then $K=L$. Here, $E_K$ is the collection of all directions parallel to the edges of $K$.
\end{lem}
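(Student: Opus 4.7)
The plan is to reduce the width hypothesis to an equality of support functions at edge-normal directions, and then exploit the standard representation of a convex polygon as the intersection of its supporting half-planes along its edges.

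First I would observe that, in $\R^2$, for any $u\in E_K$ the direction $u^\perp$ is the outward normal of a pair of parallel edges of $K$, and analogously for $u\in E_L$. Since $K$ is origin-symmetric, $h_K(u^\perp)=h_K(-u^\perp)$, so $w_K(u^\perp)=2h_K(u^\perp)$, and the same identity holds for $L$. Therefore the hypothesis $w_K(u^\perp)=w_L(u^\perp)$ for every $u\in E_K\cup E_L$ is equivalent to
$$h_K(v)=h_L(v)\quad \text{for every outward normal $v$ to an edge of $K$ or of $L$}.$$

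Next I would use the standard representation of a convex polygon as the intersection of its edge half-planes. Writing
$$L=\bigcap_{j}\{x\in\R^2:\langle x,v_j^L\rangle\leq h_L(v_j^L)\},$$
where $v_j^L$ ranges over the outward normals of the edges of $L$, the equality $h_K(v_j^L)=h_L(v_j^L)$ from the previous step gives
$$K\subseteq\{x\in\R^2:\langle x,v_j^L\rangle\leq h_K(v_j^L)\}=\{x\in\R^2:\langle x,v_j^L\rangle\leq h_L(v_j^L)\}$$
for every $j$, so $K\subseteq L$. Swapping the roles of $K$ and $L$, this time using the edge normals of $K$, yields the reverse inclusion, hence $K=L$.

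I do not anticipate any genuine obstacle. The only subtle point worth emphasising is why the hypothesis is taken over $E_K\cup E_L$ rather than only $E_K$: to deduce $K\subseteq L$ one needs the support-function equality at every edge normal of $L$, and edge normals of $L$ need not coincide with edge normals of $K$. The ``$\cup E_L$'' clause is exactly what delivers this, and symmetrically for the opposite inclusion.
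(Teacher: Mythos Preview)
Your argument is correct. The reduction from widths to support functions via origin-symmetry, followed by the half-plane representation of a polygon along its edge normals, gives $K\subseteq L$ and $L\subseteq K$ exactly as you describe.

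The paper's proof rests on the same underlying fact (a point outside $K$ must violate one of the edge-normal inequalities defining $K$), but packages it differently: it introduces $K\Cup L:=\conv(K\cup L)$, observes $w_{K\Cup L}=\max\{w_K,w_L\}$, and argues by contradiction that $K=K\Cup L$ (and, by symmetry, $L=K\Cup L$). Your direct double-inclusion avoids the auxiliary $\Cup$ construction and the contradiction, so it is a little cleaner; the paper's route has the mild advantage that the $\Cup$ device and its width formula are stated separately and could in principle be reused elsewhere. Substantively the two arguments are the same.
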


\begin{proof}
Clearly, $K\subseteq K\Cup L$ and $w_K(u^\perp)=w_{K\Cup L}(u^\perp)$, for all $u\in E_K$. Assume $K\subsetneqq K\Cup L$. Then there exists a point $\{x\}\in K\Cup L$, but $\{x\}\notin K$. Thus we can find a direction $\eta\in E_K$, such that, $2\langle x,\eta\rangle=\langle x-(-x),\eta\rangle>w_K(\eta^\perp)$, implying $w_K(\eta^\perp)< w_{[-x,x]\Cup K}(\eta^\perp)$. On the other hand, since $[-x,x]\Cup K\subseteq K\Cup L$, we have
$$
w_{[-x,x]\Cup K}(\eta^\perp)\leq w_{K\Cup L}(\eta^\perp)=w_K(\eta^\perp),
$$
Contradiction.
\end{proof}

\begin{proof}[Proof of Theorem \ref{t1.2}]
Here, we use the weaker condition mentioned in Remark \ref{r1.4}. Note that, $|(K\cap \Z^2)|u^\perp|<|K\cap \Z^2|$, if $u\in D_1K$; but $|(K\cap \Z^2)|u^\perp|=|K\cap \Z^2|$, if $u\in \Z^2\backslash D_1K$. For any $u\in E_K$, we have $u\in D_1L$; otherwise, $$|(L\cap\Z^2)|u^\perp|=|L\cap \Z^2|=|(L\cap\Z^2)|\xi^\perp|=|(K\cap\Z^2)|\xi^\perp|=|K\cap\Z^2|>|(K\cap\Z^2)|u^\perp|$$ for some $\xi\in \Z^2\backslash (D_1K\cup D_1L)$. Then by Lemma \ref{l3.5}, we have
$$
|(K\cap \Z^2)|{u^\perp}|=|\hat{u}|w_K(u^\perp)+1\ \mbox{ and }\ |(2K\cap \Z^2)|{u^\perp}|=2|\hat{u}|w_K(u^\perp)+1.
$$
By the assumption,
\begin{eqnarray*}
&&
|(2L\cap \Z^2)|{u^\perp}|-1=|(2K\cap \Z^2)|{u^\perp}|-1=2|\hat{u}|w_K(u^\perp)\\
&&=2(|(K\cap \Z^2)|{u^\perp}|-1)=2(|(L\cap \Z^2)|{u^\perp}|-1).
\end{eqnarray*}
Applying Lemma \ref{l3.5},
$$
|(L\cap \Z^2)|{u^\perp}|=|\hat{u}|w_L(u^\perp)+1=|(K\cap \Z^2)|{u^\perp}|=|\hat{u}|w_K(u^\perp)+1.
$$
Therefore,
$$
w_L(u^\perp)=w_K(u^\perp),
$$
for any $u\in E_K$. Similarly, we can show $w_L(u^\perp)=w_K(u^\perp)$, for any $u\in E_L$. Then the conclusion follows from Lemma \ref{l3.10}.
\end{proof}
\section{Acknowledgement}

I would like to express my gratitude to my supervisor, Dr. Vladyslav Yaskin, for fruitful discussions.


\begin{thebibliography}{00}
\bibitem{Ga} {\sc R.~Gardner}, {\em Geometric tomography,} second edition, Cambridge University Press, New York, 2006.

\bibitem{GGZ}{\sc R.~ Gardner, P.~Gronchi, and C.~Zong}, {\em Sums, projections, and sections of lattice sets, and the discrete covariogram}, Discrete Comput. Geom., {\bf 34} (2005) 391--409.

\bibitem{RYZ}{\sc D.~Ryabogin, V.~Yaskin, and N.~Zhang}, {\em Unique determination of convex lattice sets}, preprint.

\bibitem{Xiong} {\sc H. Xiong}, {\em On a discrete version of Alexandrov's projection theorem,} Acta Mathematica Sinica, English Series, (8) {\bf 29} (2013), 1597--1606.

\bibitem{Z} {\sc G. Ziegler}, {\em Lectures on polytopes}, Springer-Verlag, Berlin, 1995.

\bibitem{Zhou} {\sc J.~Zhou}, {\em On the projections of convex lattice sets}, Acta Mathematica Sinica, English Series, (10) {\bf 26} (2010), 1969--1980.
\end{thebibliography}
\end{document}